\documentclass[12pt]{amsproc}
\usepackage{hyperref}
\usepackage{geometry}
\geometry{
a4paper,
total={170mm,257mm},
left=30mm,
right=30mm,
top=30mm,
bottom=25mm,
}

\usepackage{graphicx}
\usepackage{amsthm}
\usepackage{amssymb}

\theoremstyle{plain}
\newtheorem{thm}{Theorem}[section]
\newtheorem{cor}[thm]{Corollary}
\newtheorem{lem}[thm]{Lemma}
\newtheorem{prop}[thm]{Proposition}
\newtheorem{defn}[thm]{Definition}
\newtheorem{exa}[thm]{Example}

\begin{document}

\title{On Graded $2$-Absorbing Coprimary Submodules}

\author{Malik \textsc{Bataineh}}
\address{Department of Mathematics and Statistics, Jordan University of Science and Technology, Irbid, Jordan}
\email{msbataineh@just.edu.jo}

\author{Rashid \textsc{Abu-Dawwas}}
\address{Department of Mathematics, Yarmouk University, Irbid, Jordan}
\email{rrashid@yu.edu.jo}

\subjclass[2010]{Primary 16W50; Secondary 13A02}

\keywords{Graded strongly $2$-absorbing second submodules; graded $2$-absorbing primary submodules; graded strongly $2$-absorbing secondary submodules.}

\begin{abstract}
The aim of this article is to introduce the concept of graded $2$-absorbing coprimary submodules as a generalization of graded strongly $2$-absorbing second submodules, and explore some properties of this class. A non-zero graded $R$-submodule $N$ of a graded $R$-module $M$ is called a graded $2$-absorbing coprimary $R$-submodule if whenever $x, y$ are homogeneous elements of $R$ and $K$ is a graded $R$-submodule of $M$ such that $xyN\subseteq K$, then either $x$ or $y$ is in the graded radical of $(K :_{R} N)$ or $xy\in Ann_{R}(N)$. Several results have been achieved.
\end{abstract}

\maketitle

\section{Introduction}

Throughout this article, $G$ will be a group with identity $e$ and $R$ a commutative ring with nonzero unity $1$. Then $R$ is said to be $G$-graded if $R=\displaystyle\bigoplus_{g\in G} R_{g}$ with $R_{g}R_{h}\subseteq R_{gh}$ for all $g, h\in G$ where $R_{g}$ is an additive subgroup of $R$ for all $g\in G$. The elements of $R_{g}$ are called homogeneous of degree $g$. If $x\in R$, then $x$ can be written uniquely as $\displaystyle\sum_{g\in G}x_{g}$, where $x_{g}$ is the component of $x$ in $R_{g}$. It is known that $R_e$ is a subring of $R$ and $1\in R_e$. The set of all homogeneous elements of $R$ is $h(R)=\displaystyle\bigcup_{g\in G}R_{g}$. Let $I$ be an ideal of a graded ring $R$. Then $I$ is said to be graded ideal if $I=\displaystyle\bigoplus_{g\in G}(I\cap R_{g})$, i.e., for $x\in I$, $x=\displaystyle\sum_{g\in G}x_{g}$ where $x_{g}\in I$ for all $g\in G$. An ideal of a graded ring need not be graded. Assume that $M$ is a left unital $R$-module. Then $M$ is said to be $G$-graded if $M=\displaystyle\bigoplus_{g\in G}M_{g}$ with $R_{g}M_{h}\subseteq M_{gh}$ for all $g,h\in G$ where $M_{g}$ is an additive subgroup of $M$ for all $g\in G$. The elements of $M_{g}$ are called homogeneous of degree $g$. It is clear that $M_{g}$ is an $R_{e}$-submodule of $M$ for all $g\in G$. If $x\in M$, then $x$ can be written uniquely as $\displaystyle\sum_{g\in G}x_{g}$, where $x_{g}$ is the component of $x$ in $M_{g}$. The set of all homogeneous elements of $M$ is $h(M)=\displaystyle\bigcup_{g\in G}M_{g}$. Let $N$ be an $R$-submodule of a graded $R$-module $M$. Then $N$ is said to be graded $R$-submodule if $N=\displaystyle\bigoplus_{g\in G}(N\cap M_{g})$, i.e., for $x\in N$, $x=\displaystyle\sum_{g\in G}x_{g}$ where $x_{g}\in N$ for all $g\in G$. An $R$-submodule of a graded $R$-module need not be graded. For more details and terminology, see \cite{Hazart, Nastasescue}.

\begin{lem}\label{1}(\cite{Farzalipour}, Lemma 2.1) Let $R$ be a $G$-graded ring and $M$ be a $G$-graded $R$-module.

\begin{enumerate}

\item If $I$ and $J$ are graded ideals of $R$, then $I+J$ and $I\bigcap J$ are graded ideals of $R$.

\item If $N$ and $K$ are graded $R$-submodules of $M$, then $N+K$ and $N\bigcap K$ are graded $R$-submodules of $M$.

\item If $N$ is a graded $R$-submodule of $M$, $r\in h(R)$, $x\in h(M)$ and $I$ is a graded ideal of $R$, then $Rx$, $IN$ and $rN$ are graded $R$-submodules of $M$. Moreover, $(N:_{R}M)=\left\{r\in R:rM\subseteq N\right\}$ is a graded ideal of $R$.
\end{enumerate}
\end{lem}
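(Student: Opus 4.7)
The plan is to prove all three parts directly from definitions, exploiting the single characterization: an ideal $A$ of $R$ (or submodule $A$ of $M$) is graded iff every element of $A$ has all its homogeneous components inside $A$. Each item then reduces to a decomposition argument --- start from an element of the candidate object, decompose it into homogeneous pieces, and check those pieces lie in the candidate.

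For (1), I would write an element of $I+J$ as $a+b$ with $a\in I$, $b\in J$, decompose each summand using the graded structures of $I$ and $J$, and match components of equal degree to place $(a+b)_g = a_g + b_g$ in $I+J$; for $I\cap J$, any $x\in I\cap J$ has all components in $I$ (since $I$ is graded and $x\in I$) and in $J$, hence in $I\cap J$. Part (2) repeats the argument verbatim, reading ``graded submodule of $M$'' for ``graded ideal of $R$''. For (3), I would treat each of $Rx$, $IN$, $rN$ in turn: for $Rx$ with $x\in M_h$, I observe that $rx=\sum_g r_g x$ with $r_g x\in M_{gh}\cap Rx$; for $IN$, I expand a generator $an$ as $\sum_{g,h} a_g n_h$ with $a_g\in I$ and $n_h\in N$, so each summand $a_g n_h\in M_{gh}\cap IN$; and the case $rN$ for $r\in h(R)$ falls out of the $IN$ case applied to the (graded) ideal $Rr$, which is itself graded by the $Rx$ argument used inside $R$ viewed as a module over itself.

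The least mechanical claim is that $(N:_R M)$ is graded. I would take $r\in (N:_R M)$, decompose $r=\sum_g r_g$, and try to show each $r_g$ sends $M$ into $N$. Fixing a homogeneous $m\in M_h$, one has $rm\in N$ with decomposition $\sum_g r_g m$ in $M$ (the pieces landing in distinct $M_{gh}$), and since $N$ is graded every $r_g m$ lies in $N$; additive extension from homogeneous $m$ to all $m\in M$ then gives $r_g M\subseteq N$, i.e.\ $r_g\in (N:_R M)$. I expect no genuine obstacle here --- the whole lemma is a direct unwinding of definitions, with the only care needed in the double-sum expansion used for $IN$ and in the observation that the homogeneous components of $rm$ are precisely the products $r_g m_h$.
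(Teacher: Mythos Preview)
Your argument is correct in every part: the componentwise reasoning for $I+J$, $I\cap J$, $N+K$, $N\cap K$, $Rx$, $IN$, $rN$, and $(N:_{R}M)$ all go through exactly as you describe. One small point of phrasing: an arbitrary element of $IN$ is a \emph{finite sum} $\sum_{i} a_{i}n_{i}$ of products, not a single product $an$; but since the homogeneous components of a sum are sums of the individual components, your argument for a single generator extends immediately, and your closing remark about the ``double-sum expansion'' shows you already have this in mind.

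As for comparison with the paper: there is nothing to compare. The paper does not prove this lemma at all --- it is quoted verbatim from \cite{Farzalipour} (Lemma~2.1) and used as a standing background fact. Your write-up therefore supplies what the paper simply imports by citation; the direct definition-unwinding you outline is exactly the standard proof one would find in the cited source.
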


In particular, $Ann_{R}(M)=(0_{M}:_{R}M)$ is a graded ideal of $R$. Let $M$ be a graded $R$-module, $N$ be a graded $R$-submodule of $M$ and $x\in h(R)$. Then $(N:_{M}x)=\left\{m\in M:xm\in N\right\}$ is a graded $R$-submodule of $M$, see (\cite{Refai Dawwas}, Lemma 2.9). Let $P$ be a proper graded ideal of $R$. Then the graded radical of $P$ is $Grad(P)$, and is defined to be the set of all $r\in R$ such that for each $g\in G$, there exists a positive integer $n_{g}$ for which $r_{g}^{n_{g}}\in P$. One can see that if $r\in h(R)$, then $r\in Grad(P)$ if and only if $r^{n}\in P$ for some positive integer $n$. In fact, $Grad(P)$ is a graded ideal of $R$, see \cite{Refai Hailat}.

In \cite{Zoubi Dawwas Ceken}, graded $2$-absorbing ideals as a generalization of graded prime ideals have been defined. A proper graded ideal $P$ of $R$ is said to be graded $2$-absorbing if whenever $a, b, c\in h(R)$ such that $abc\in P$, then either $ab\in P$ or $ac\in P$ or $bc\in P$. Al-Zoubi and Sharafat in \cite{Zoubi Sharafat}, introduced the concept of graded $2$-absorbing primary ideals. This concept generalizes both graded primary ideals and graded $2$-absorbing ideals. A proper graded ideal $P$ of $R$ is called a graded $2$-absorbing primary ideal of $R$ if whenever $a, b, c\in h(R)$ with $abc\in P$, then $ab\in P$ or $ac\in Grad(P)$ or $bc\in Grad(P)$. Graded second submodules of graded modules over commutative rings were introduced by Ansari-Toroghy and Farshadifar in \cite{Ansari Farshadifar} as follows: A non-zero graded $R$-submodule $N$ of a graded $R$-module $M$ is said to be a graded second $R$-submodule if for all $a\in h(R)$, either $aN = \{0\}$ or $aN = N$. In this case, $P=Ann_{R}(N)$ is a graded prime ideal of $R$, and $N$ is called a graded $P$-second $R$-submodule of $M$. In \cite{Refai Dawwas}, a generalization of graded second submodules has been given as follows: A non-zero graded $R$-submodule $N$
of a graded $R$-module $M$ is said to be a graded strongly $2$-absorbing second $R$-submodule of $M$ if whenever $x, y\in h(R)$ and $K$ is a graded $R$-submodule of $M$ such that $xyN\subseteq K$, then $xN\subseteq K$ or $yN\subseteq K$ or $xy\in Ann_{R}(N)$.

In this article, we act in accordance with \cite{Ceken} to introduce the concept of graded $2$-absorbing coprimary $R$-submodules of a graded $R$-module $M$ as a generalization of graded strongly $2$-absorbing second submodules, and explore some properties of this class. A non-zero graded $R$-submodule $N$ of a graded $R$-module $M$ is called a graded $2$-absorbing coprimary $R$-submodule if whenever $x, y\in h(R)$ and $K$ is a graded $R$-submodule of $M$ such that $xyN\subseteq K$, then $x\in Grad((K :_{R} N))$ or $y\in Grad((K :_{R} N))$ or $xy\in Ann_{R}(N)$.

\section{Graded $2$-Absorbing Coprimary Submodules}

In this section, we introduce and study the concept of graded $2$-absorbing coprimary submodules.

\begin{defn}Let $M$ be a graded $R$-module and $N$ be a non-zero graded $R$-submodule of $M$. Then $N$ is said to be a graded $2$-absorbing coprimary $R$-submodule of $M$ if whenever $x, y\in h(R)$ and $K$ is a graded $R$-submodule of $M$ such that $xyN\subseteq K$, then $x\in Grad((K :_{R} N))$ or $y\in Grad((K :_{R} N))$ or $xy\in Ann_{R}(N)$.
\end{defn}

Clearly, every graded strongly $2$-absorbing second submodule is graded $2$-absorbing coprimary. The next example shows that the converse is not true in general.

\begin{exa}\label{Example 2.2} Consider the $\mathbb{Z}$-module $M = \mathbb{Z}_{4}\bigoplus \mathbb{Z}_{9}\bigoplus \mathbb{Z}_{5}$. Then by (\cite{Ceken}, Example 2.2), $N =\mathbb{Z}_{4}\bigoplus \mathbb{Z}_{9}\bigoplus\{0\}$ is a $2$-absorbing coprimary submodule of $M$. So, if we consider the trivial graduation for $M$, $N$ will be  a graded $2$-absorbing coprimary submodule of $M$. On the other hand, $N$ is not graded strongly $2$-absorbing second submodule of $M$ by (\cite{Refai Dawwas}, Corollary 3.4).
\end{exa}

\begin{prop}\label{Proposition 2.6} Let $M$ be a graded $R$-module and $N$ be a graded $2$-absorbing coprimary $R$-submodule of $M$. If $K$ is a graded $R$-submodule of $M$ such that $N\nsubseteq K$, then $(K :_{R} N)$ is a graded $2$-absorbing primary ideal of $R$.
\end{prop}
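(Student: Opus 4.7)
The plan is to establish two structural claims: that $(K :_R N)$ is a proper graded ideal, and that it satisfies the defining three-element condition of a graded $2$-absorbing primary ideal. Properness is immediate, since $N \nsubseteq K$ forces $1 \cdot N = N \nsubseteq K$, hence $1 \notin (K :_R N)$. For gradedness, I would argue in the spirit of Lemma~\ref{1}: writing $r = \sum_g r_g \in (K :_R N)$ and evaluating against a homogeneous $n \in h(N)$ gives $rn = \sum_g r_g n \in K$, and since $K$ is graded each homogeneous component $r_g n$ lies in $K$; running this over the homogeneous generators of $N$ yields $r_g N \subseteq K$ for every $g$, so each $r_g \in (K :_R N)$.

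For the main condition, fix $a, b, c \in h(R)$ with $abc \in (K :_R N)$, i.e., $abcN \subseteq K$. Since products of homogeneous elements remain homogeneous, I would apply the graded $2$-absorbing coprimary hypothesis on $N$ to each of the three bracketings $(ab)(c)N \subseteq K$, $(a)(bc)N \subseteq K$, and $(ac)(b)N \subseteq K$. Every application produces a three-way disjunction whose first two alternatives place a single factor into $Grad((K :_R N))$, while the third alternative is the annihilator condition $abc \in Ann_R(N)$. Because $Grad((K :_R N))$ is a graded ideal of $R$ by \cite{Refai Hailat}, any outcome of the form $a$, $b$, or $c \in Grad((K :_R N))$ forces the corresponding products $ac$ or $bc$ (or both) into $Grad((K :_R N))$, delivering the conclusion; likewise, if the first bracketing returns $c \in Grad((K :_R N))$ then both $ac$ and $bc$ lie in $Grad((K :_R N))$, and we are done.

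The main obstacle is the residual case in which each of the three bracketings collapses to the annihilator alternative $abc \in Ann_R(N)$, so that $abcN = 0$ while no single factor has been placed in $Grad((K :_R N))$. Here I would first dispose of the easy sub-case $abN = 0$, which gives $ab \in Ann_R(N) \subseteq (K :_R N)$ outright. In the harder sub-case $abN \neq 0$, the relations $abcN = 0$, $ac \notin Grad((K :_R N))$, and $bc \notin Grad((K :_R N))$ have to be combined with the graded structure of $K$ and $N$ to extract $abN \subseteq K$; this is the technical heart of the argument, and I would expect it to mirror the ungraded treatment in \cite{Ceken}, with the homogeneous components of elements of $N$ and $K$ tracked via Lemma~\ref{1} to justify that the coprimary hypothesis can be iterated on smaller graded submodules until $ab \in (K :_R N)$ is forced.
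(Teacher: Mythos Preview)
Your three-bracketing approach against the fixed submodule $K$ leaves a real gap, and you correctly flag it: in the residual case $abcN=0$ with $abN\neq 0$ and $ac, bc \notin Grad((K:_{R} N))$, nothing in your setup forces $abN \subseteq K$. Iterating the coprimary condition on ``smaller graded submodules'' is not an argument, and the bracketing $(ab)(c)N\subseteq K$ at best yields $ab\in Grad((K:_{R} N))$, not $ab\in (K:_{R} N)$, which is what the definition of a graded $2$-absorbing primary ideal demands for that clause. (Your description that ``the first two alternatives place a \emph{single} factor into $Grad((K:_{R}N))$'' is also not quite right for the $(ab)(c)$ bracketing.)

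The paper sidesteps the entire case analysis with a trick you are missing: instead of testing against $K$, absorb one of the three factors into the submodule. From $xyzN\subseteq K$ one has $xyN\subseteq (K:_{M} z)$, and $(K:_{M} z)$ is again a graded $R$-submodule of $M$. A \emph{single} application of the coprimary hypothesis to the pair $x,y$ with this new submodule then gives either $x^{n} N\subseteq (K:_{M} z)$, i.e.\ $x^{n} z N\subseteq K$, whence $(xz)^{n} N = z^{n-1}(x^{n}zN)\subseteq K$ and $xz\in Grad((K:_{R} N))$; or symmetrically $yz\in Grad((K:_{R} N))$; or $xy\in Ann_{R}(N)\subseteq (K:_{R} N)$. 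All three alternatives land exactly on the three clauses of the target definition, and no residual case ever appears.
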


\begin{proof}By Lemma \ref{1}, $(K:_{R}N)$ is a graded ideal of $R$. Let $x, y, z\in h(R)$ such that $xyz\in (K :_{R} N)$. Then $xyzN = z(xyN)\subseteq K$, and then $xyN\subseteq (K :_{M} z)$. Since $N$ is a graded $2$-absorbing coprimary $R$-submodule of $M$, $x\in Grad((K:_{M}z):_{R}N)$ or $y\in Grad((K:_{M}z):_{R}N)$ or $xy\in Ann_{R}(N)$. If $xy\in Ann_{R}(N)$, then $xyN=\{0\}\subseteq K$, and then $xy\in (K:_{R}N)$. If $x\in Grad((K:_{M}z):_{R}N)$, then $x^{n}zN\subseteq K$ for some positive integer $n$, and then $(xz)^{n}N\subseteq K$, which implies that $xz\in Grad((K:_{R}N))$. Similarly, if $y\in Grad((K:_{M}z):_{R}N)$, then $yz\in Grad((K:_{R}N))$. Hence, $(K :_{R} N)$ is a graded $2$-absorbing primary ideal of $R$.
\end{proof}
As a direct consequence of Proposition \ref{Proposition 2.6}, we have the following corollary.

\begin{cor}\label{Proposition 2.6 (1)} Let $M$ be a graded $R$-module and $N$ be a graded $2$-absorbing coprimary $R$-submodule of $M$. Then $Ann_{R}(N)$ is a graded $2$-absorbing primary ideal of $R$.
\end{cor}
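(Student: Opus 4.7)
The plan is to deduce this corollary directly from Proposition \ref{Proposition 2.6} by specializing the graded submodule $K$ to the zero submodule. Observe that $\{0_{M}\}$ is itself a graded $R$-submodule of $M$ (it is the trivial direct sum), and by the very definition of the residual ideal one has $(\{0_{M}\} :_{R} N) = Ann_{R}(N)$. Thus, if we can legitimately invoke Proposition \ref{Proposition 2.6} with $K = \{0_{M}\}$, the conclusion that $Ann_{R}(N)$ is a graded $2$-absorbing primary ideal of $R$ follows at once.

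The only hypothesis of Proposition \ref{Proposition 2.6} that requires checking is the condition $N \nsubseteq K$. With $K = \{0_{M}\}$ this amounts to $N \neq \{0_{M}\}$, and this is built into the standing assumption that $N$ is a graded $2$-absorbing coprimary $R$-submodule, since such submodules are by definition non-zero. So the hypothesis is satisfied for free.

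There is no real obstacle here; the corollary is essentially a relabeling of the proposition in the special case $K = \{0_{M}\}$. The only point worth flagging is that one should state the choice of $K$ explicitly and record the identification $(\{0_{M}\} :_{R} N) = Ann_{R}(N)$ before citing Proposition \ref{Proposition 2.6}, so that the reader sees precisely which instance of the proposition is being used.
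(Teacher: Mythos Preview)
Your proposal is correct and matches the paper's own treatment: the corollary is stated there as a direct consequence of Proposition~\ref{Proposition 2.6}, and specializing $K=\{0_{M}\}$ with the identification $(\{0_{M}\}:_{R}N)=Ann_{R}(N)$ is exactly the intended instance. The observation that $N\neq\{0_{M}\}$ is guaranteed by the definition of graded $2$-absorbing coprimary is the only point to verify, and you handle it correctly.
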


\begin{cor}\label{Corollary 2.7} Let $M$ be a graded $R$-module and $N$ be a graded $2$-absorbing coprimary $R$-submodule of $M$. Then $Grad(Ann_{R}(N))$ is a graded $2$-absorbing ideal of $R$.
\end{cor}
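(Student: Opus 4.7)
My plan is to reduce this corollary to Corollary~\ref{Proposition 2.6 (1)} together with the fact that the graded radical of a graded $2$-absorbing primary ideal of $R$ is always graded $2$-absorbing. The first step uses what is already established: since $N$ is a graded $2$-absorbing coprimary $R$-submodule of $M$, Corollary~\ref{Proposition 2.6 (1)} tells us that $Ann_{R}(N)$ is a graded $2$-absorbing primary ideal of $R$. Also, since $N\ne\{0\}$, the ideal $Ann_{R}(N)$ is proper, and hence so is $Grad(Ann_{R}(N))$, which is a graded ideal by the discussion in the introduction (citing \cite{Refai Hailat}).

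Next, I would verify the graded $2$-absorbing property by a direct radical computation. Take homogeneous elements $a,b,c\in h(R)$ with $abc\in Grad(Ann_{R}(N))$. By the characterization of the graded radical for homogeneous elements, there exists a positive integer $n$ such that $(abc)^{n}=a^{n}b^{n}c^{n}\in Ann_{R}(N)$. Since $a^{n},b^{n},c^{n}$ are again homogeneous and $Ann_{R}(N)$ is graded $2$-absorbing primary, at least one of the following holds:
\[
a^{n}b^{n}\in Ann_{R}(N),\qquad a^{n}c^{n}\in Grad(Ann_{R}(N)),\qquad b^{n}c^{n}\in Grad(Ann_{R}(N)).
\]

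Finally, I would translate each case back to a membership of $ab$, $ac$ or $bc$ in $Grad(Ann_{R}(N))$. In the first case, $(ab)^{n}\in Ann_{R}(N)$ gives $ab\in Grad(Ann_{R}(N))$ immediately. In the second case, $(ac)^{n}\in Grad(Ann_{R}(N))$ means some further power $(ac)^{nm}\in Ann_{R}(N)$, so again $ac\in Grad(Ann_{R}(N))$; the third case is symmetric. Thus the graded $2$-absorbing condition is verified.

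I do not expect a serious obstacle: the argument is a standard ``raising-to-powers'' trick once the $2$-absorbing primary structure of $Ann_{R}(N)$ is in place. The only subtle point is keeping track that the graded radical is well-behaved on homogeneous elements (so that $(abc)^{n}$ can be tested in $Ann_{R}(N)$ rather than only componentwise), and that the two ``radical'' conclusions need one extra exponent to be absorbed into a single radical membership. Both are already available from the preliminaries.
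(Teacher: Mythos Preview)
Your proposal is correct and follows essentially the same approach as the paper: both arguments start from Corollary~\ref{Proposition 2.6 (1)} to get that $Ann_{R}(N)$ is a graded $2$-absorbing primary ideal, and then pass to its graded radical. The only difference is that the paper invokes \cite[Theorem~2.3]{Zoubi Sharafat} for the implication ``graded $2$-absorbing primary $\Rightarrow$ graded radical is graded $2$-absorbing,'' whereas you reprove this implication directly with the standard power-raising argument.
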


\begin{proof}The result follows by Corollary \ref{Proposition 2.6 (1)} and (\cite{Zoubi Sharafat}, Theorem 2.3).
\end{proof}

\begin{prop}\label{Proposition 2.10} Let $M$ be a graded $R$-module and $N$ be a graded $2$-absorbing coprimary $R$-submodule of $M$. Then $aN$ is a graded $2$-absorbing coprimary $R$-submodule of $M$ for all $a\in h(R)-Ann_{R}(N)$.
\end{prop}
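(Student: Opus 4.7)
The plan is to reduce the $2$-absorbing coprimary condition for $aN$ to that of $N$ by moving the scalar $a$ across the colon. First, $aN$ is a non-zero graded $R$-submodule of $M$: it is graded by Lemma \ref{1}(3), and it is non-zero precisely because $a\notin Ann_{R}(N)$. I would then take $x,y\in h(R)$ and a graded $R$-submodule $K$ of $M$ with $xy(aN)\subseteq K$. Since $R$ is commutative,
\[
xy(aN)\;=\;xyaN\;=\;a(xyN),
\]
so the hypothesis $xy(aN)\subseteq K$ is equivalent to $xyN\subseteq (K:_{M}a)$. By the Refai--Dawwas result recalled just after Lemma \ref{1}, $K':=(K:_{M}a)$ is itself a graded $R$-submodule of $M$ since $a\in h(R)$.

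Next, I would apply the defining property of the graded $2$-absorbing coprimary submodule $N$ to the relation $xyN\subseteq K'$. This yields one of three alternatives: $x\in Grad((K':_{R}N))$, $y\in Grad((K':_{R}N))$, or $xy\in Ann_{R}(N)$.

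The bridge back is the identity $(K':_{R}N)=(K:_{R}aN)$, which is an immediate calculation: for $r\in R$, $r\in (K':_{R}N)$ iff $rN\subseteq (K:_{M}a)$ iff $a(rN)=r(aN)\subseteq K$ iff $r\in (K:_{R}aN)$. Consequently, the two radical alternatives translate directly into $x\in Grad((K:_{R}aN))$ or $y\in Grad((K:_{R}aN))$. In the remaining case $xy\in Ann_{R}(N)$ we get $xyN=\{0\}$, hence $xy(aN)=a(xyN)=\{0\}$, so $xy\in Ann_{R}(aN)$. Any of these is precisely one of the three conclusions in the definition of a graded $2$-absorbing coprimary submodule, applied to $aN$.

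The only nontrivial insight is the transformation $xy(aN)\subseteq K\Leftrightarrow xyN\subseteq (K:_{M}a)$, paired with the identity $((K:_{M}a):_{R}N)=(K:_{R}aN)$; once these are noted, the statement reduces to a single application of the hypothesis on $N$, so no real obstacle is expected and the argument avoids the $2$-absorbing primary machinery of Proposition~\ref{Proposition 2.6} entirely.
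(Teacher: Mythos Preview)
Your argument is correct and follows essentially the same route as the paper: rewrite $xy(aN)\subseteq K$ as $xyN\subseteq (K:_{M}a)$, apply the hypothesis on $N$, and translate back to $aN$. Your version is slightly more explicit in noting that $aN\neq\{0\}$ and that $(K:_{M}a)$ is graded, and you phrase the conclusion via the identity $((K:_{M}a):_{R}N)=(K:_{R}aN)$ where the paper simply passes from $x^{n}N\subseteq (K:_{M}a)$ to $x^{n}(aN)\subseteq K$, but these are the same computation.
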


\begin{proof}Let $a\in h(R)-Ann_{R}(N)$. Then by Lemma \ref{1}, $aN$ is a graded $R$-submodule of $M$. Suppose that $x, y\in h(R)$ and $K$ is a graded $R$-submodule of $M$ such that $xy(aN)\subseteq K$. Then $xyN\subseteq(K :_{M}a)$. Since $N$ is a graded $2$-absorbing coprimary $R$-submodule of $M$, $x^{n}N\subseteq(K :_{M}a)$ for some positive integer $n$ or $y^{m}N\subseteq(K :_{M}a)$ for some positive integer $m$ or $xy\in Ann_{R}(N)$, and then $x^{n}(aN)\subseteq K$ or $y^{m}(aN)\subseteq K$ or $xy\in Ann_{R}(N)\subseteq Ann_{R}(aN)$, as needed.
\end{proof}

Let $M$ and $S$ be two $G$-graded $R$-modules. An $R$-homomorphism $f:M\rightarrow S$ is said to be a graded $R$-homomorphism if $f(M_{g})\subseteq S_{g}$ for all $g\in G$.

\begin{lem}\label{2}(\cite{Dawwas Bataineh}, Lemma 2.16) Suppose that $f:M\rightarrow S$ is a graded $R$-homomorphism of graded $R$-modules. If $K$ is a graded $R$-submodule of $S$, then $f^{-1}(K)$ is a graded $R$-submodule of $M$.
\end{lem}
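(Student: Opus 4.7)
The plan is to verify directly from the definition that $f^{-1}(K) = \bigoplus_{g \in G} (f^{-1}(K) \cap M_g)$, exploiting the uniqueness of the homogeneous decomposition together with the fact that $f$ preserves degrees.

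First I would dispose of the $R$-submodule structure of $f^{-1}(K)$: since $K$ is an $R$-submodule of $S$ and $f$ is an $R$-homomorphism, it is standard that $f^{-1}(K)$ is an $R$-submodule of $M$ (closed under addition and scalar multiplication, contains $0_M$ because $f(0_M) = 0_S \in K$). No grading is used here; this is just to set the scene.

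The substantive step is to show that $f^{-1}(K)$ decomposes as a direct sum of its intersections with the $M_g$. I would take an arbitrary $x \in f^{-1}(K)$ and write its unique homogeneous decomposition $x = \sum_{g \in G} x_g$ with $x_g \in M_g$. Applying $f$ yields $f(x) = \sum_{g \in G} f(x_g)$, and because $f$ is a graded $R$-homomorphism, each $f(x_g)$ lies in $S_g$. Thus $\sum_g f(x_g)$ is a decomposition of $f(x)$ into homogeneous components of $S$. On the other hand, $f(x) \in K$ and $K$ is graded, so the homogeneous components of $f(x)$ (in $S$) already lie in $K \cap S_g$. By uniqueness of the homogeneous decomposition in $S$, these two decompositions must coincide component-wise, so $f(x_g) \in K \cap S_g \subseteq K$ for every $g$. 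Hence $x_g \in f^{-1}(K) \cap M_g$ for every $g$, which is precisely what is required to conclude that $f^{-1}(K)$ is a graded $R$-submodule of $M$.

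There is no serious obstacle here; the only step that requires care is the invocation of uniqueness of the homogeneous decomposition to identify $f(x_g)$ with the $g$-component of $f(x)$ in $S$. This identification is exactly where the hypothesis that $f$ is a \emph{graded} homomorphism (rather than an arbitrary $R$-homomorphism) is used — without it, the $f(x_g)$ need not be homogeneous, and we could not conclude membership in $K$ component-wise.
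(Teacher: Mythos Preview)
Your argument is correct: the key point is exactly that $f(x_g)\in S_g$ forces $f(x_g)$ to be the $g$-component of $f(x)$, and then gradedness of $K$ pulls each $x_g$ back into $f^{-1}(K)$. Note that the paper does not supply its own proof of this lemma but merely cites it from \cite{Dawwas Bataineh}, so there is nothing to compare against; your direct verification is the standard one and would serve perfectly well as a self-contained proof here.
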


\begin{lem}\label{3}(\cite{Atani Saraei}, Lemma 4.8) Suppose that $f:M\rightarrow S$ is a graded $R$-homomorphism of graded $R$-modules. If $L$ is a graded $R$-submodule of $M$, then $f(L)$ is a graded $R$-submodule of $f(M)$.
\end{lem}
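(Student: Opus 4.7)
The plan is to verify the defining condition of a graded submodule directly: exhibit a decomposition of each element of $f(L)$ into homogeneous components of $f(M)$ that again lie in $f(L)$. Before anything else I would fix the ambient grading: since $f$ is a graded $R$-homomorphism we have $f(M_{g})\subseteq S_{g}$, so setting $f(M)_{g}:=f(M)\cap S_{g}$ one obtains $f(M)=\sum_{g\in G}f(M_{g})\subseteq\sum_{g\in G}f(M)_{g}$, and this sum is automatically direct because $S=\bigoplus_{g\in G}S_{g}$. Thus $f(M)$ is a $G$-graded $R$-module with this grading, and it is against this grading that $f(L)$ must be tested.

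Next I would take an arbitrary $y\in f(L)$, pick $x\in L$ with $f(x)=y$, and use that $L$ is a graded $R$-submodule of $M$ to write $x=\sum_{g\in G}x_{g}$ with $x_{g}\in L\cap M_{g}$ for every $g\in G$ (only finitely many nonzero). Applying $R$-linearity of $f$ gives $y=\sum_{g\in G}f(x_{g})$. Since $f$ is graded, $f(x_{g})\in S_{g}$; since $x_{g}\in L$, also $f(x_{g})\in f(L)$; hence $f(x_{g})\in f(L)\cap S_{g}=f(L)\cap f(M)_{g}$. So every element of $f(L)$ decomposes as a sum of elements of the pieces $f(L)\cap f(M)_{g}$, and the directness of $\bigoplus_{g\in G}(f(L)\cap f(M)_{g})$ is inherited from that of $\bigoplus_{g\in G}S_{g}$. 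This gives the required equality $f(L)=\bigoplus_{g\in G}(f(L)\cap f(M)_{g})$.

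The step one must be careful about is the identification of the ambient grading, because the statement is about $f(L)$ as a submodule of $f(M)$, not of $S$: one needs to observe that $f(L)\subseteq f(M)$ forces $f(L)\cap S_{g}=f(L)\cap f(M)\cap S_{g}=f(L)\cap f(M)_{g}$, so that the homogeneous components computed in $S$ coincide with those relative to the induced grading on $f(M)$. Once this bookkeeping is in place there is no real obstacle: the content is a one-line consequence of pushing a homogeneous decomposition of a preimage through $f$ and invoking the definition of a graded homomorphism.
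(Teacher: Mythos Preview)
Your argument is correct: pushing a homogeneous decomposition of a preimage through the graded homomorphism $f$ and observing that $f(L)\cap S_{g}=f(L)\cap f(M)_{g}$ is exactly the right verification, and your care in first pinning down the induced grading on $f(M)$ is appropriate. Note, however, that the paper does not supply its own proof of this lemma---it is simply quoted from \cite{Atani Saraei}, Lemma 4.8---so there is no in-paper argument to compare against; your direct verification is the standard one and would serve as a self-contained proof.
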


\begin{prop}\label{Proposition 2.11}Suppose that $f:M\rightarrow S$ is a graded $R$-homomorphism of graded $R$-modules.
\begin{enumerate}
\item If $N$ is a graded $2$-absorbing coprimary $R$-submodule of $M$ such that $N\nsubseteq Ker( f )$, then $f (N)$ is a graded $2$-absorbing coprimary $R$-submodule of $S$.
\item If $K$ is a graded $2$-absorbing coprimary $R$-submodule of $S$ and $K\subseteq f (M)$, then $f^{-1}(K)$ is a graded $2$-absorbing coprimary $R$-submodule of $M$.
\end{enumerate}
\end{prop}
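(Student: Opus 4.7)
My plan is to handle both parts by transporting the defining property across the homomorphism $f$, using Lemmas~\ref{2} and~\ref{3} to preserve the graded structure at each step. Non-triviality is cheap: $f(N)\neq\{0\}$ because $N\nsubseteq\ker(f)$, and $f^{-1}(K)\neq\{0\}$ because the assumption $K\subseteq f(M)$ lets any non-zero $k\in K$ be lifted to a non-zero preimage in $f^{-1}(K)$.

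For part (1), I would take $x,y\in h(R)$ and a graded $R$-submodule $L$ of $S$ with $xy\,f(N)\subseteq L$. Since $f$ is $R$-linear, $f(xyN)\subseteq L$, so $xyN\subseteq f^{-1}(L)$, and by Lemma~\ref{2} this preimage is a graded $R$-submodule of $M$. Applying the coprimary hypothesis on $N$ to the triple $(x,y,f^{-1}(L))$ produces some positive integer $n$ for which $x^{n}N\subseteq f^{-1}(L)$, $y^{n}N\subseteq f^{-1}(L)$, or $xy\in Ann_{R}(N)$. Pushing the first two inclusions forward by $f$ gives $x^{n}f(N)\subseteq L$ and $y^{n}f(N)\subseteq L$ respectively, i.e. $x$ or $y$ lies in $Grad((L:_{R}f(N)))$; in the annihilator case, $xy\,f(N)=f(xyN)=\{0\}$, so $xy\in Ann_{R}(f(N))$.

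For part (2), I would take $x,y\in h(R)$ and a graded $R$-submodule $L$ of $M$ with $xy\,f^{-1}(K)\subseteq L$. Applying $f$ and using $f(f^{-1}(K))=K$ (which is valid precisely because $K\subseteq f(M)$) gives $xyK=f(xy\,f^{-1}(K))\subseteq f(L)$, and by Lemma~\ref{3} the image $f(L)$ is a graded $R$-submodule of $f(M)\subseteq S$. The coprimary hypothesis on $K$ then yields, for some $n$, one of $x^{n}K\subseteq f(L)$, $y^{n}K\subseteq f(L)$, or $xyK=\{0\}$. Pulling each case back through $f^{-1}$ translates it into the corresponding statement for $f^{-1}(K)\subseteq M$, as required.

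The main delicate point, in part (2), is the pull-back step: from $x^{n}K\subseteq f(L)$ one gets $x^{n}f^{-1}(K)\subseteq f^{-1}(f(L))=L+\ker(f)$ rather than $L$ itself, and similarly $xyK=\{0\}$ gives only $xy\,f^{-1}(K)\subseteq\ker(f)$. The trick I would use is the inclusion $\ker(f)\subseteq f^{-1}(K)$ (since $0\in K$), which combined with the hypothesis $xy\,f^{-1}(K)\subseteq L$ forces $xy\,\ker(f)\subseteq L$; this is what must be exploited to absorb the $\ker(f)$ slack and land inside $(L:_{R}f^{-1}(K))$ (respectively $Ann_{R}(f^{-1}(K))$) as the definition demands. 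This is the step I expect to require the most care.
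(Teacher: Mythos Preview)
Your treatment of part~(1) is correct and coincides with the paper's argument.

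For part~(2) you have put your finger on a real obstruction. The paper's own proof asserts $x^{n} f^{-1}(K) = f^{-1}(x^{n} K)\subseteq L$ and $Ann_{R}(K)=Ann_{R}(f^{-1}(K))$ without justification; these are exactly the pull-back steps you flag as delicate. Your proposed remedy via $xy\ker(f)\subseteq L$ cannot close the gap, because it controls only the product $xy$ on the kernel, whereas what is needed is control of $x^{n}$ (or $y^{m}$) alone. In fact the gap is not repairable: part~(2) is false as stated. With the trivial grading take $R=M=\mathbb{Z}$, $S=\mathbb{Z}/5\mathbb{Z}$, $f$ the quotient map, and $K=S$. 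Then $K$ is graded second (hence graded $2$-absorbing coprimary) and $K\subseteq f(M)$, but $f^{-1}(K)=\mathbb{Z}$ is not graded $2$-absorbing coprimary in itself: for $x=2$, $y=3$, $L=6\mathbb{Z}$ one has $xy\,\mathbb{Z}\subseteq L$, yet $2,3\notin Grad(6\mathbb{Z})=6\mathbb{Z}$ and $6\notin Ann_{\mathbb{Z}}(\mathbb{Z})=\{0\}$. An extra hypothesis, such as $f$ injective (so that $\ker(f)=\{0\}$ and your slack term vanishes), is required for the argument to go through.
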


\begin{proof}
\begin{enumerate}
\item By Lemma \ref{3}, $f(N)$ is a graded $R$-submodule of $S$. Let $x, y\in h(R)$ and $K$ be a graded $R$-submodule of $S$ such that $xyf (N)\subseteq K$. Then $xyN\subseteq f^{-1}(K)$. Since $f^{-1}(K)$ is a graded $R$-submodule of $M$ by Lemma \ref{2} and $N$ is a graded $2$-absorbing coprimary $R$-submodule of $M$, we have $x^{n} N\subseteq f^{-1}(K)$ for some positive integer $n$ or $y^{m}N\subseteq f^{-1}(K)$ for some positive integer $m$ or $xy\in Ann_{R}(N)\subseteq Ann_{R}( f (N))$, and then $x^{n} f (N)\subseteq K$ or $y^{m} f (N)\subseteq K$ or $xy\in Ann_{R}( f (N))$, as required.
\item By Lemma \ref{2}, $f^{-1}(K)$ is a graded $R$-submodule of $M$. If $f^{-1}(K) = \{0\}$, then since $K\subseteq f (M)$, we have $f ( f^{-1}(K)) =K = \{0\}$, which is a contradiction. So, $f^{-1}(K)\neq \{0\}$. Let $x, y\in h(R)$ and $L$ be a graded $R$-submodule of $M$ such that $xyf^{-1}(K)\subseteq L$. Since $K\subseteq f (M)$, $xyK\subseteq f (L)$. Since $f(L)$ is a graded $R$-submodule of $S$ by Lemma \ref{3} and $K$ is a graded $2$-absorbing coprimary $R$-submodule of $S$, $x^{n}K\subseteq f (L)$ for some positive integer $n$ or $y^{m}K\subseteq f (L)$ for some positive integer $m$ or $xy\in Ann_{R}(K)$, and then $x^{n} f^{-1}(K) = f^{-1}(x^{n} K)\subseteq L$ or $y^{m} f^{-1}(K) = f^{-1}(y^{m}K)\subseteq L$ or $xy\in Ann_{R}(K) =Ann_{R}( f^{-1}(K))$, as needed.
\end{enumerate}
\end{proof}

\begin{lem}\label{Theorem 2.5}Let $M$ be a graded $R$-module and $N$ be a non-zero graded $R$-submodule of $M$. Then $N$ is a graded $2$-absorbing coprimary $R$-submodule of $M$ if and only if for any $x, y\in h(R)$, either $x^{n}N\subseteq xyN$ for some positive integer $n$ or $y^{m}N\subseteq xyN$ for some positive integer $m$ or $xy\in Ann_{R}(N)$.
\end{lem}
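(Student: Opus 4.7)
The plan is to prove both directions by directly applying the definition of graded $2$-absorbing coprimary submodule, with the forward direction relying on the key observation that $xyN$ is itself a graded $R$-submodule of $M$.

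For the forward direction, I would fix arbitrary $x, y\in h(R)$. Since $R$ is $G$-graded, the product $xy$ again lies in $h(R)$, so by Lemma \ref{1}(3) the set $xyN = (xy)N$ is a graded $R$-submodule of $M$. Then I set $K := xyN$; trivially $xyN\subseteq K$. Applying the definition of graded $2$-absorbing coprimary to this $K$ yields one of three alternatives: $x\in Grad((xyN:_{R} N))$, $y\in Grad((xyN:_{R} N))$, or $xy\in Ann_{R}(N)$. Since $x$ and $y$ are homogeneous, the first two options translate (via the characterization of the graded radical recalled in the introduction) to $x^{n}N\subseteq xyN$ for some positive integer $n$, respectively $y^{m}N\subseteq xyN$ for some positive integer $m$, which is exactly what the lemma claims.

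For the backward direction, assume the stated condition holds for every pair $x, y\in h(R)$. Let $x, y\in h(R)$ and let $K$ be any graded $R$-submodule of $M$ with $xyN\subseteq K$. By hypothesis, either $x^{n}N\subseteq xyN$ for some $n$, or $y^{m}N\subseteq xyN$ for some $m$, or $xy\in Ann_{R}(N)$. In the first case $x^{n}N\subseteq xyN\subseteq K$, so $x^{n}\in (K:_{R} N)$ and hence $x\in Grad((K:_{R} N))$; the second case gives $y\in Grad((K:_{R} N))$ symmetrically; the third case is already one of the three conclusions in the definition. Thus $N$ is graded $2$-absorbing coprimary.

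I expect no serious obstacle: the only substantive point is recognizing that the right submodule to plug into the definition is $K = xyN$, and that this is indeed graded because $xy$ is homogeneous (an immediate consequence of the grading on $R$ together with Lemma \ref{1}(3)). The rest is bookkeeping between the condition ``$x\in Grad((K:_{R} N))$'' for homogeneous $x$ and the condition ``$x^{n}N\subseteq K$ for some positive integer $n$'', which was explicitly noted after Lemma \ref{1}.
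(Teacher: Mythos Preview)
Your proof is correct and follows exactly the same approach as the paper: for the forward direction, take $K = xyN$ (which is graded because $xy\in h(R)$, via Lemma~\ref{1}) and apply the definition; for the converse, use $xyN\subseteq K$ to pass from the condition on $xyN$ to the conclusion about $K$. The paper simply says ``the converse is clear,'' while you have written it out in full, but the arguments are otherwise identical.
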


\begin{proof}Suppose that $N$ is a graded $2$-absorbing coprimary $R$-submodule of $M$. Let $x, y\in h(R)$. Then $xy\in h(R)$, and then by Lemma \ref{1}, $xyN$ is a graded $R$-submodule of $M$ such that $xyN\subseteq xyN$. Since $N$ is a graded $2$-absorbing coprimary $R$-submodule of $M$, $x^{n}N\subseteq xyN$ for some positive integer $n$ or $y^{m}N\subseteq xyN$ for some positive integer $m$ or $xy\in Ann_{R}(N)$. The converse is clear.
\end{proof}

Let $M$ be a graded $R$-module and $S$ be a multiplicative subset of $h(R)$. Then $S^{-1}M$ is a graded $S^{-1}R$-module with $$(S^{-1}M)_g=\left\{\frac{m}{s},m\in M_h, s\in S\cap R_{hg^{-1}}\right\},$$
$$(S^{-1}R)_g=\left\{\frac{a}{s},a\in R_h, s\in S\cap R_{hg^{-1}}\right\}.$$

\begin{prop}\label{Proposition 2.12} Let $M$ be a graded $R$-module, $S$ be a multiplicative subset of $h(R)$ and $N$ be a graded $R$-submodule of $M$ such that $S^{-1}N\neq\{0\}$. If $N$ is a graded $2$-absorbing coprimary $R$-submodule of $M$, then $S^{-1}N$ is a graded $2$-absorbing coprimary $S^{-1}R$-submodule of $S^{-1}M$.
\end{prop}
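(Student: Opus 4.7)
\bigskip
\noindent\textbf{Proof plan.} The plan is to reduce the statement to the internal characterization provided by Lemma \ref{Theorem 2.5}. That is, rather than arguing with an arbitrary graded $S^{-1}R$-submodule $K$ of $S^{-1}M$, I would verify directly that for any two homogeneous elements $\frac{x}{s},\frac{y}{t}\in h(S^{-1}R)$ (with $x,y\in h(R)$ and $s,t\in S$), one of the three alternatives
$$\left(\tfrac{x}{s}\right)^{n}S^{-1}N\subseteq \tfrac{xy}{st}S^{-1}N,\quad \left(\tfrac{y}{t}\right)^{m}S^{-1}N\subseteq \tfrac{xy}{st}S^{-1}N,\quad \tfrac{xy}{st}\in Ann_{S^{-1}R}(S^{-1}N)$$
must hold. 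The hypothesis $S^{-1}N\neq\{0\}$ takes care of the non-zero requirement in the definition, so this characterization is exactly what is needed.

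To feed Lemma \ref{Theorem 2.5} in the reverse direction, I would apply the forward direction of the same lemma to the pair $x,y\in h(R)$ and to $N\subseteq M$. This yields three cases: either $x^{n}N\subseteq xyN$ for some $n\geq 1$, or $y^{m}N\subseteq xyN$ for some $m\geq 1$, or $xy\in Ann_{R}(N)$. Each case will be lifted to the localization. The third case is immediate: $xyN=\{0\}$ forces $\frac{xy}{st}\cdot\frac{z}{u}=0$ for every $\frac{z}{u}\in S^{-1}N$, so $\frac{xy}{st}\in Ann_{S^{-1}R}(S^{-1}N)$.

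For the first case (the second being symmetric), I would pick an arbitrary $\frac{z}{u}\in S^{-1}N$, compute $(\frac{x}{s})^{n}\cdot\frac{z}{u}=\frac{x^{n}z}{s^{n}u}$, and use $x^{n}z=xyz'$ with $z'\in N$ to rewrite this as $\frac{xy z'}{s^{n}u}$. The key algebraic identity is then
$$\frac{xy\,z'}{s^{n}u}\;=\;\frac{xy\,(tz')}{st\cdot s^{n-1}u}\;=\;\frac{xy}{st}\cdot\frac{tz'}{s^{n-1}u},$$
which lies in $\frac{xy}{st}S^{-1}N$ because $tz'\in N$ and $s^{n-1}u\in S$ (here $n\geq 1$ is needed). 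Thus $(\frac{x}{s})^{n}S^{-1}N\subseteq \frac{xy}{st}S^{-1}N$, as required by Lemma \ref{Theorem 2.5}.

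The only real obstacle is the denominator bookkeeping in that last display: one must be careful that the cancellation of $t$ in top and bottom is a legitimate equality in $S^{-1}M$ (it is, via the standard relation $\frac{at}{bt}=\frac{a}{b}$ when $t\in S$) and that the witness $s^{n-1}u$ belongs to $S$. Once these routine points are confirmed, the three cases together establish the characterization from Lemma \ref{Theorem 2.5} for $S^{-1}N$, and the result follows.
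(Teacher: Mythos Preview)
Your proposal is correct and follows essentially the same approach as the paper: both arguments invoke Lemma~\ref{Theorem 2.5} for $N$ to obtain the three alternatives on $x,y\in h(R)$, and then transport each alternative to the localization to verify the same characterization for $S^{-1}N$. The only cosmetic difference is that the paper handles the inclusion $(\tfrac{x}{s})^{n}S^{-1}N\subseteq \tfrac{x}{s}\tfrac{y}{t}S^{-1}N$ at the submodule level (using that $\tfrac{1}{s^{n}}$ and $\tfrac{1}{st}$ are units in $S^{-1}R$), whereas you check it element-by-element with explicit denominator bookkeeping.
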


\begin{proof}Let $x, y\in h(R)$ and $s, t\in S$. Since $N$ is a graded $2$-absorbing coprimary $R$-submodule of $M$, by Lemma \ref{Theorem 2.5}, $x^{n}N\subseteq xyN$ for some positive integer $n$ or $y^{m}N\subseteq xyN$ for some positive integer $m$ or $xy\in Ann_{R}(N)$. If $xy\in Ann_{R}(N)$, then $\frac{x}{s}\frac{y}{t}(S^{-1}N) = \{0\}$. If $x^{n}N\subseteq xyN$, then $(\frac{x}{s})^{n}(S^{-1}N)=(\frac{x^{n}}{s^{n}})(S^{-1}N)=\frac{1}{s^{n}}S^{-1}(x^{n}N)\subseteq\frac{1}{s^{n}}S^{-1}(xyN)=S^{-1}(xyN)=
(\frac{1}{s}\frac{1}{t})(S^{-1}(xyN))=(\frac{x}{s}\frac{y}{t})(S^{-1}N)$. Similarly, if $y^{m}N\subseteq xyN$, then $(\frac{x}{s})^{m}(S^{-1}N)\subseteq (\frac{x}{s}\frac{y}{t})(S^{-1}N)$. Hence, by Lemma \ref{Theorem 2.5}, $S^{-1}N$ is a graded $2$-absorbing coprimary $S^{-1}R$-submodule of $M$.
\end{proof}

\begin{defn}Let $M$ be a $G$-graded $R$-module, $g\in G$ and $N$ be a non-zero graded $R$-submodule of $M$. Then $N$ is said to be a $g$-$2$-absorbing coprimary $R$-submodule of $M$ if whenever $x, y\in R_{g}$ and $K$ is a graded $R$-submodule of $M$ such that $xyN\subseteq K$, then $x\in Grad((K :_{R} N))$ or $y\in Grad((K :_{R} N))$ or $xy\in Ann_{R}(N)$.
\end{defn}

\begin{lem}\label{Lemma 2.3} Let $M$ be a $G$-graded $R$-module, $g\in G$, $I$ be a graded ideal of $R$ and $N$ be a $g$-$2$-absorbing coprimary $R$-submodule of $M$. If $x\in R_{g}$ and $K$ is a graded $R$-submodule of $M$ such that $IxN\subseteq K$, then $x\in Grad((K :_{R} N))$ or $I_{g}\subseteq Grad((K :_{R} N))$ or $I_{g}x\subseteq Ann_{R}(N)$.
\end{lem}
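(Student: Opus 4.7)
The plan is to argue by case analysis: assume the first two alternatives fail, i.e., $x\notin Grad((K:_{R}N))$ and $I_{g}\not\subseteq Grad((K:_{R}N))$, and derive $I_{g}x\subseteq Ann_{R}(N)$. Fix an arbitrary $b\in I_{g}$. Since $I$ is a graded ideal, $b\in I$, so $bxN\subseteq IxN\subseteq K$, and both $b$ and $x$ lie in $R_{g}$, so the $g$-$2$-absorbing coprimary hypothesis applies to the pair $(b,x)$ and $K$.

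First I would handle the easy case $b\notin Grad((K:_{R}N))$. The hypothesis then forces one of the three conclusions; since $x\notin Grad((K:_{R}N))$ and $b\notin Grad((K:_{R}N))$ by assumption, the only surviving option is $bx\in Ann_{R}(N)$. Next I would use the assumption $I_{g}\not\subseteq Grad((K:_{R}N))$ to pick a fixed witness $a\in I_{g}$ with $a\notin Grad((K:_{R}N))$; by the easy case just proved, $ax\in Ann_{R}(N)$.

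The remaining case is $b\in Grad((K:_{R}N))$. The key trick is to look at $a+b$. Since $a,b\in I_{g}$ and $I_{g}$ is the additive group $I\cap R_{g}$, we have $a+b\in I_{g}\subseteq R_{g}$, and $(a+b)xN\subseteq IxN\subseteq K$. If $a+b\in Grad((K:_{R}N))$, then subtracting $b\in Grad((K:_{R}N))$ would yield $a\in Grad((K:_{R}N))$, a contradiction; hence $a+b\notin Grad((K:_{R}N))$. Applying the easy case to $a+b$ in place of $b$ gives $(a+b)x\in Ann_{R}(N)$, and then $bx=(a+b)x-ax\in Ann_{R}(N)$ since $Ann_{R}(N)$ is closed under subtraction. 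In both cases $bx\in Ann_{R}(N)$, so $I_{g}x\subseteq Ann_{R}(N)$.

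The only subtle point is the additivity step: I must be sure that $Grad((K:_{R}N))$ contains enough to allow the subtraction $a=(a+b)-b$ to derive a contradiction, which is immediate since the graded radical is in particular an additive subgroup of $R$. Everything else is a direct unpacking of the $g$-$2$-absorbing coprimary definition and the fact that $IxN\subseteq K$ forces $bxN\subseteq K$ for every $b\in I_{g}$. I expect no computational obstacle; the routine care needed is just to keep the three possible conclusions of the $g$-$2$-absorbing coprimary property tidy while switching between $b$ and $a+b$.
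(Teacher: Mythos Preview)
Your proof is correct and follows essentially the same approach as the paper's, with only a cosmetic difference: you assume $x\notin Grad((K:_{R}N))$ and $I_{g}\not\subseteq Grad((K:_{R}N))$ and deduce $I_{g}x\subseteq Ann_{R}(N)$, whereas the paper assumes $x\notin Grad((K:_{R}N))$ and $I_{g}x\not\subseteq Ann_{R}(N)$ and deduces $I_{g}\subseteq Grad((K:_{R}N))$. Both arguments hinge on the same shift-by-a-witness trick (applying the $g$-$2$-absorbing coprimary hypothesis to a sum of two elements of $I_{g}$ to transfer information between them), so the two proofs are simply duals of one another.
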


\begin{proof}Suppose that $x\notin Grad((K :_{R} N))$ and $I_{g}x\nsubseteq Ann_{R}(N)$. Then there exists $y\in I_{g}$ such that $xy\notin Ann_{R}(N)$, and then $xyN\subseteq K$. Since $N$ is $g$-$2$-absorbing coprimary, $y\in Grad((K:_{R}N))$. Let $z\in I_{g}$. Then $(y+z)xN\subseteq K$, and then $y + z\in Grad((K :_{R} N))$ or $(y + z)x\in Ann_{R}(N)$. If $y + z\in Grad((K :_{R} N))$, then $z\in Grad((K :_{R} N))$. If $(y + z)x\in Ann_{R}(N)$, then $xz\notin Ann_{R}(N)$. So, $zxN\subseteq K$ implies that $z\in Grad((K :_{R} N))$. Hence, $I_{g}\subseteq Grad((K :_{R} N))$.
\end{proof}

\begin{thm}\label{Lemma 2.4} Let $M$ be a $G$-graded $R$-module, $g\in G$, $I, J$ be two graded ideals of $R$ and $N$ be a $g$-$2$-absorbing coprimary $R$-submodule of $M$. If $K$ is a graded $R$-submodule of $M$ such that $IJN\subseteq K$, then $I_{g}\subseteq Grad((K :_{R} N))$ or $J_{g}\subseteq Grad((K :_{R} N))$ or $I_{g}J_{g}\subseteq Ann_{R}(N)$.
\end{thm}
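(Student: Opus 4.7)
The plan is to derive this from Lemma \ref{Lemma 2.3} by a standard ``linear combination'' trick, analogous to the usual proof that the $2$-absorbing property propagates from elements to ideals.

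First, I would argue by contrapositive: assume $I_{g}\nsubseteq Grad((K:_{R}N))$ and $J_{g}\nsubseteq Grad((K:_{R}N))$, and aim to prove $I_{g}J_{g}\subseteq Ann_{R}(N)$. For an arbitrary $y\in J_{g}\subseteq R_{g}$, the hypothesis $IJN\subseteq K$ gives $IyN\subseteq K$. Applying Lemma \ref{Lemma 2.3} to the graded ideal $I$ and the homogeneous element $y$, one of the following holds: $y\in Grad((K:_{R}N))$, or $I_{g}\subseteq Grad((K:_{R}N))$, or $I_{g}y\subseteq Ann_{R}(N)$. The middle option is ruled out by assumption, so for every $y\in J_{g}$ we have
\[
y\in Grad((K:_{R}N)) \quad\text{or}\quad I_{g}y\subseteq Ann_{R}(N). \tag{$\ast$}
\]

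Next, since $J_{g}\nsubseteq Grad((K:_{R}N))$, I would fix some $y_{0}\in J_{g}$ with $y_{0}\notin Grad((K:_{R}N))$. By $(\ast)$, this forces $I_{g}y_{0}\subseteq Ann_{R}(N)$. I would then take an arbitrary $y\in J_{g}$ and aim to show $I_{g}y\subseteq Ann_{R}(N)$. Assume for contradiction that $I_{g}y\nsubseteq Ann_{R}(N)$; by $(\ast)$ this gives $y\in Grad((K:_{R}N))$. Consider $y_{0}+y\in J_{g}$. Since $y_{0}=(y_{0}+y)-y$ and $y\in Grad((K:_{R}N))$ while $y_{0}\notin Grad((K:_{R}N))$, we must have $y_{0}+y\notin Grad((K:_{R}N))$. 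Applying $(\ast)$ to $y_{0}+y$ yields $I_{g}(y_{0}+y)\subseteq Ann_{R}(N)$. Combined with $I_{g}y_{0}\subseteq Ann_{R}(N)$, this gives $I_{g}y\subseteq Ann_{R}(N)$, the desired contradiction. Hence $I_{g}y\subseteq Ann_{R}(N)$ for every $y\in J_{g}$, i.e., $I_{g}J_{g}\subseteq Ann_{R}(N)$.

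The main obstacle is the step that bridges from the per-element statement $(\ast)$ to the uniform conclusion on the full product $I_{g}J_{g}$: a priori, different elements of $J_{g}$ could fall into different alternatives of $(\ast)$. The trick that resolves this is the additive combination $y_{0}+y$, which forces consistency because $Grad((K:_{R}N))\cap R_{g}$ and $\{y\in R_{g}:I_{g}y\subseteq Ann_{R}(N)\}$ are both additive subgroups of $R_{g}$, so a group-theoretic ``no cover by two proper subgroups'' argument (here realized concretely through $y_{0}+y$) collapses the two cases into one.
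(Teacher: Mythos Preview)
Your argument is correct. Both your proof and the paper's proof proceed by contrapositive, invoke Lemma~\ref{Lemma 2.3}, and finish with an additive-combination trick, so the overall strategy is the same. The organization differs, however: the paper applies Lemma~\ref{Lemma 2.3} symmetrically in both directions to obtain $(I_{g}\setminus Grad((K:_{R}N)))J_{g}\subseteq Ann_{R}(N)$ and $I_{g}(J_{g}\setminus Grad((K:_{R}N)))\subseteq Ann_{R}(N)$, and then returns to the raw $g$-$2$-absorbing coprimary definition applied to the product $(x+z)(y+w)$ to handle the remaining elements. You instead apply Lemma~\ref{Lemma 2.3} only once (parametrically in $y\in J_{g}$) to obtain the dichotomy $(\ast)$, and then collapse the two alternatives by the ``no cover by two subgroups'' argument in $J_{g}$ alone, never invoking the definition again. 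Your route is a bit more economical; the paper's is more symmetric in $I$ and $J$.
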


\begin{proof}Suppose that $I_{g}\nsubseteq Grad((K :_{R} N))$ and $J_{g}\nsubseteq Grad((K :_{R} N))$. Then there exist $x\in I_{g}-Grad((K:_{R}N))$ and $y\in J_{g}-Grad((K:_{R}N))$. Let $z\in I_{g}$ and $w\in J_{g}$. Now, $xJN\subseteq K$, so by Lemma \ref{Lemma 2.3}, $xJ_{g}\subseteq Ann_{R}(N)$, and so $(I_{g}-Grad((K:_{R}N)))J_{g}\subseteq Ann_{R}(N)$. Similarly, $I_{g}y\subseteq Ann_{R}(N)$, and so $I_{g}(J_{g}-Grad((K:_{R}N)))\subseteq Ann_{R}(N)$. So, we conclude that $xy, xw, zy\in Ann_{R}(N)$. Now, $(x+z)(y+w)N\subseteq K$. Therefore, $x + z\in Grad((K :_{R} N))$ or $y + w\in Grad((K :_{R} N))$ or $(x + z)(y +w)\in Ann_{R}(N)$. If $x +z\in Grad((K :_{R} N))$, then $z\notin Grad((K :_{R} N))$. Thus $z\in I_{g}-Grad((K :_{R} N))$, which implies that $zw\in Ann_{R}(N)$. Similarly, if $y + w\in Grad((K :_{R} N))$, then $zw\in Ann_{R}(N)$. If $(x + z)(y + w) = xy + xw + yz + zw\in Ann_{R}(N)$, then $zw\in Ann_{R}(N)$. Hence, $I_{g} J_{g}\subseteq Ann_{R}(N)$.
\end{proof}

Graded comultiplication modules have been introduced by Toroghy and Farshadifar in \cite{Toroghy}; a graded $R$-module $M$ is said to be graded comultiplication if for every graded $R$-submodule $N$ of $M$, $N=(0:_{M}I)$ for some graded ideal $I$ of $R$, or equivalently, $N=(0:_{M}Ann_{R}(N))$.

\begin{prop}\label{Corollary 2.8} Let $M$ be a graded comultiplication $R$-module. If $N$ is a graded $2$-absorbing coprimary $R$-submodule of $M$ such that $Grad(Ann_{R}(N)) =Ann_{R}(N)$, then $N$ is a graded strongly $2$-absorbing second $R$-submodule of $M$. In particular, $N$ is a graded $2$-absorbing second $R$-submodule of $M$.
\end{prop}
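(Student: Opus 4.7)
The plan is to verify the definition of graded strongly $2$-absorbing second directly. Take $x, y \in h(R)$ and a graded submodule $K$ of $M$ with $xyN \subseteq K$. Applying the hypothesis that $N$ is graded $2$-absorbing coprimary, I get three alternatives; in the case $xy \in Ann_R(N)$ we are done, so by symmetry I would assume $x \in Grad((K :_R N))$, that is, $x^{n}N \subseteq K$ for some positive integer $n$. The remaining task is then to promote this to $xN \subseteq K$.

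The idea is to compare $xN$ and $K$ via annihilators, exploiting the graded comultiplication property. Since $xN$ is a graded submodule by Lemma \ref{1}, the comultiplication hypothesis yields $xN = (0 :_{M} Ann_{R}(xN))$ and $K = (0 :_{M} Ann_{R}(K))$, so $xN \subseteq K$ will follow from $Ann_{R}(K) \subseteq Ann_{R}(xN)$. Because $Ann_{R}(K)$ is a graded ideal, it suffices to take a homogeneous $r \in Ann_{R}(K)$ and show $rxN = 0$, i.e.\ $rx \in Ann_{R}(N)$. The core computation is then very short: from $rK = 0$ and $x^{n}N \subseteq K$ one gets $rx^{n}N = 0$, so $rx^{n} \in Ann_{R}(N)$; hence $(rx)^{n} = r^{n-1}(rx^{n}) \in Ann_{R}(N)$ by the ideal property, and since $rx$ is homogeneous we conclude $rx \in Grad(Ann_{R}(N)) = Ann_{R}(N)$ by hypothesis. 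The ``in particular'' clause is then immediate, since a graded strongly $2$-absorbing second submodule is a fortiori graded $2$-absorbing second.

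The main obstacle is a conceptual one: recognizing that the comultiplication assumption is precisely the tool needed to translate ``$xN \subseteq K$'' into a statement about annihilators, and that once combined with $Grad(Ann_{R}(N)) = Ann_{R}(N)$ it erases the distinction between $x \in Grad((K :_{R} N))$ and $xN \subseteq K$. Once that translation is set up, the rest of the argument collapses to the single algebraic identity $(rx)^{n} = r^{n-1}(rx^{n})$ inside the ideal $Ann_{R}(N)$.
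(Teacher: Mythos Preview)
Your argument is correct. The core step---upgrading $x^{n}N\subseteq K$ to $xN\subseteq K$ via the annihilator comparison $Ann_{R}(K)\subseteq Ann_{R}(xN)$ and the identity $(rx)^{n}=r^{n-1}(rx^{n})\in Ann_{R}(N)$---is clean and uses only the comultiplication hypothesis together with $Grad(Ann_{R}(N))=Ann_{R}(N)$, exactly as you say.

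Your route differs from the paper's. The paper does not verify the definition of graded strongly $2$-absorbing second directly; instead it first invokes Corollary~\ref{Corollary 2.7} to conclude that $Grad(Ann_{R}(N))$ is a graded $2$-absorbing ideal, so by the radical hypothesis $Ann_{R}(N)$ itself is graded $2$-absorbing, and then cites an external result (\cite{Refai Dawwas}, Proposition~3.7) which, for graded comultiplication modules, deduces graded strongly $2$-absorbing second from the $2$-absorbing property of $Ann_{R}(N)$. What the paper's approach buys is brevity (two citations); what your approach buys is self-containment and a slightly sharper insight: you show in effect that, under comultiplication and the radical hypothesis, the conditions $x\in Grad((K:_{R}N))$ and $xN\subseteq K$ already coincide, so no separate structural result about $2$-absorbing ideals is needed. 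Both the ``in particular'' clause and your justification for it agree with the paper's implicit use of \cite{Refai Dawwas}.
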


\begin{proof}The result follows by Corollary \ref{Corollary 2.7} and (\cite{Refai Dawwas}, Proposition 3.7).
\end{proof}

Let $R_{1}$ and $R_{2}$ be two $G$-graded rings, $M_{1}$ be a $G$-graded $R_{1}$-module, $M_{2}$ be a $G$-graded $R_{2}$-module, $R=R_{1}\times R_{2}$ and $M=M_{1}\times M_{2}$. Then $M$ is a $G$-graded $R$-module by $M_{g}=(M_{1})_{g}\times (M_{2})_{g}$ for all $g\in G$.

\begin{thm}Let $R_{1}$ and $R_{2}$ be two $G$-graded rings, $M_{1}$ be a $G$-graded $R_{1}$-module, $M_{2}$ be a $G$-graded $R_{2}$-module, $R=R_{1}\times R_{2}$ and $M=M_{1}\times M_{2}$.
\begin{enumerate}
\item If $N=N_{1}\times N_{2}$ is a graded $2$-absorbing coprimary $R$-submodule of $M$, then $Ann_{R_{1}}(N_{1})$ is a graded primary ideal of $R_{1}$ and $Ann_{R_{2}}(N_{2})$ is a graded primary ideal of $R_{2}$.
\item If $N_{1}$ is a graded $R_{1}$-submodule of $M_{1}$, $N_{2}$ is a graded $R_{2}$-submodule of $M_{2}$ such that $Ann_{R_{1}}(N_{1})$ is a graded primary ideal of $R_{1}$, $Ann_{R_{2}}(N_{2})$ is a graded primary ideal of $R_{2}$ and $N=N_{1}\times N_{2}$, then $Ann_{R}(N)$ is a graded $2$-absorbing primary ideal of $R$.
\item If $N_{1}$ is a graded $2$-absorbing coprimary $R_{1}$-submodule of $M_{1}$ and $N=N_{1}\times\{0_{M_{2}}\}$, then $Ann_{R}(N)$ is a graded $2$-absorbing primary ideal of $R$.
\item If $N_{2}$ is a graded $2$-absorbing coprimary $R_{2}$-submodule of $M_{2}$ and $N=\{0_{M_{1}}\}\times N_{2}$, then $Ann_{R}(N)$ is a graded $2$-absorbing primary ideal of $R$.
\end{enumerate}
\end{thm}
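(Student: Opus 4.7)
Plan. I will treat the four parts separately, using throughout the identities $Ann_R(N_1 \times N_2) = Ann_{R_1}(N_1) \times Ann_{R_2}(N_2)$, $Ann_R(N_1 \times \{0_{M_2}\}) = Ann_{R_1}(N_1) \times R_2$ (with its mirror for (4)), and the componentwise identity $Grad(I_1 \times I_2) = Grad(I_1) \times Grad(I_2)$ for graded ideals $I_i \subseteq R_i$, which is immediate from the definition of the graded radical applied componentwise to pairs $((r_1)_g, (r_2)_g)$. Write $P_i := Ann_{R_i}(N_i)$ throughout.

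For (2), I would take a homogeneous triple $(a_1,a_2), (b_1,b_2), (c_1,c_2) \in h(R)$ whose product lies in $P_1 \times P_2$, so $a_i b_i c_i \in P_i$ for $i = 1, 2$. Applying the graded primary property to each $(a_i b_i) c_i \in P_i$ gives $a_i b_i \in P_i$ or $c_i \in Grad(P_i)$. A four-branch case analysis on these independent alternatives assembles one of the three desired conclusions; the two ``mixed'' branches (e.g., $a_1 b_1 \in P_1$ together with $c_2 \in Grad(P_2)$) are resolved by re-splitting $a_1 b_1 c_1 \in P_1$ as $(a_1 c_1) b_1$ or $(b_1 c_1) a_1$ and reapplying the primary property of $P_1$ to push one of $a_1 c_1$ or $b_1 c_1$ into $Grad(P_1)$.

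For (3), Corollary~\ref{Proposition 2.6 (1)} applied to $N_1$ yields that $P_1$ is graded $2$-absorbing primary in $R_1$; since $Ann_R(N) = P_1 \times R_2$, the task reduces to checking that $I \times R_2$ is graded $2$-absorbing primary in $R$ whenever $I$ is graded $2$-absorbing primary in $R_1$. In any homogeneous triple whose product lies in $I \times R_2$, only the first-coordinate condition $a_1 b_1 c_1 \in I$ is substantive, and the hypothesis on $I$ supplies one of $a_1 b_1 \in I$, $a_1 c_1 \in Grad(I)$, $b_1 c_1 \in Grad(I)$, each of which lifts to the corresponding condition in $R$ via the graded-radical identity above. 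Part (4) is symmetric, with the roles of $(R_1, M_1, N_1)$ and $(R_2, M_2, N_2)$ interchanged.

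Part (1) is the most delicate: Corollary~\ref{Proposition 2.6 (1)} applied to $N$ only shows that $P_1 \times P_2$ is graded $2$-absorbing primary, which is strictly weaker than each $P_i$ being primary. My plan is to apply the $2$-absorbing coprimary definition of $N$ \emph{directly} with homogeneous elements engaging both factors: given $a, b \in h(R_1)$ with $ab \in P_1$, I would pick $s \in (R_2)_{\deg a}$ and $t \in (R_2)_{\deg b}$ with $st \notin P_2$ (possible because $N_2 \neq 0$), take $x = (a, s), y = (b, t) \in h(R)$ and $K = \{0_{M_1}\} \times M_2$, and verify $xy N = (0, st N_2) \subseteq K$, $(K :_R N) = P_1 \times R_2$ and $Grad((K :_R N)) = Grad(P_1) \times R_2$; the alternative $xy \in Ann_R(N)$ is excluded by the choice $st \notin P_2$, forcing $a \in Grad(P_1)$ or $b \in Grad(P_1)$. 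The main obstacle is upgrading this ``prime-radical'' conclusion to the full graded primary property of $P_1$ in the residual case $a \in Grad(P_1) \setminus P_1$ with $b \notin Grad(P_1)$; I would complete this by invoking a graded analogue of the Badawi--Tekir structure theorem for $2$-absorbing primary ideals, which applied to $P_1 \times P_2$ in $R_1 \times R_2$ (using that graded primes of $R_1 \times R_2$ take the form $\mathfrak{q} \times R_2$ or $R_1 \times \mathfrak{r}$) forces $P_i = Grad(P_i)$ with $Grad(P_i)$ graded prime, so each $P_i$ is graded prime and \emph{a fortiori} graded primary; the statement for $P_2$ follows by the symmetric argument.
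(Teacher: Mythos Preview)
Your treatment of parts (2)--(4) is correct and essentially elementary, carrying out by hand what the paper obtains by citing ready-made results from \cite{Zoubi Sharafat}: for (2) the paper notes that $P_1 \times R_2$ and $R_1 \times P_2$ are each graded primary in $R$ and then invokes \cite[Lemma~2.9]{Zoubi Sharafat} (an intersection of two graded primary ideals is graded $2$-absorbing primary), while for (3)--(4) it simply asserts that $I \times R_2$ is graded $2$-absorbing primary whenever $I$ is. Your direct case analysis is equally valid and has the virtue of being self-contained.

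Part (1), however, contains a genuine error. The structure theorem you invoke does \emph{not} force $P_i = Grad(P_i)$: take $R_1 = R_2 = \mathbb{Z}$ with the trivial grading, $P_1 = 4\mathbb{Z}$, $P_2 = 9\mathbb{Z}$; then $P_1 \times P_2$ is graded $2$-absorbing primary in $\mathbb{Z} \times \mathbb{Z}$ (this is an instance of part (2), with $N_1 = \mathbb{Z}_4$, $N_2 = \mathbb{Z}_9$), yet neither $P_i$ equals its radical and neither is prime. What the correct product-ring result (\cite[Theorem~2.10]{Zoubi Sharafat}, the graded analogue of the Badawi--Tekir--Yetkin classification for products) actually yields is exactly the desired conclusion: if $I_1 \times I_2$ is graded $2$-absorbing primary in $R_1 \times R_2$ with both $I_i$ proper, then each $I_i$ is graded \emph{primary} in $R_i$---not prime, and not equal to its radical. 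This is precisely the paper's route: apply Corollary~\ref{Proposition 2.6 (1)} to obtain that $P_1 \times P_2$ is graded $2$-absorbing primary, then cite \cite[Theorem~2.10]{Zoubi Sharafat}. Your preliminary direct attempt also has a secondary gap: the auxiliary elements $s \in (R_2)_{\deg a}$ and $t \in (R_2)_{\deg b}$ with $st \notin P_2$ need not exist, since the relevant homogeneous components of $R_2$ may be zero or every such product may annihilate $N_2$; so that line cannot be completed as stated either.
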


\begin{proof}\begin{enumerate}
\item By (\cite{Saber}, Lemma 3.12), $N_{1}$ is a graded $R_{1}$-submodule of $M_{1}$ and $N_{2}$ is a graded $R_{2}$-submodule of $M_{2}$, and then $Ann_{R_{1}}(N_{1})$ is a graded ideal of $R_{1}$ and $Ann_{R_{2}}(N_{2})$ is a graded ideal of $R_{2}$. Since $N$ is graded $2$-absorbing coprimary, by Corollary \ref{Proposition 2.6 (1)}, $Ann_{R}(N)=Ann_{R_{1}}(N_{1})\times Ann_{R_{2}}(N_{2})$ is a graded $2$-absorbing primary ideal of $R$, and then by (\cite{Zoubi Sharafat}, Theorem 2.10), $Ann_{R_{1}}(N_{1})$ is a graded primary ideal of $R_{1}$ and $Ann_{R_{2}}(N_{2})$ is a graded primary ideal of $R_{2}$.
\item By (\cite{Saber}, Lemma 3.12), $N$ is a graded $R$-submodule of $M$, and then $Ann_{R}(N)$ is a graded ideal of $R$. Since $Ann_{R_{1}}(N_{1})$ is a graded primary ideal of $R_{1}$, $Ann_{R_{1}}(N_{1})\times R_{2}$ is a graded primary ideal of $R$. Similarly, $R_{1}\times Ann_{R_{2}}(N_{2})$ is a graded primary ideal of $R$. So, by (\cite{Zoubi Sharafat}, Lemma 2.9), $Ann_{R_{1}}(N_{1})\bigcap Ann_{R_{2}}(N_{2})=Ann_{R_{1}}(N_{1})\times Ann_{R_{2}}(N_{2})=Ann_{R}(N)$ is a graded $2$-absorbing primary ideal of $R$.
\item By (\cite{Saber}, Lemma 3.12), $N$ is a graded $R$-submodule of $M$, and then $Ann_{R}(N)$ is a graded ideal of $R$. Since $N_{1}$ is graded $2$-absorbing coprimary, by Corollary \ref{Proposition 2.6 (1)}, $Ann_{R_{1}}(N_{1})$ is a graded $2$-absorbing primary ideal of $R_{1}$, and then $Ann_{R_{1}}(N_{1})\times R_{2}=Ann_{R}(N)$ is a graded $2$-absorbing primary ideal of $R$.
\item This result holds similar to part (3).
\end{enumerate}
\end{proof}

\end{document}